\documentclass[11pt]{article}
\usepackage[T1]{fontenc}
\usepackage{lmodern}
\usepackage{amsmath,amssymb,amsthm,mathtools}
\usepackage[margin=1in]{geometry}
\usepackage{microtype}
\usepackage[colorlinks=true,citecolor=blue,linkcolor=blue,urlcolor=blue]{hyperref}

\newtheorem{theorem}{Theorem}[section]
\newtheorem{lemma}[theorem]{Lemma}
\newtheorem{corollary}[theorem]{Corollary}
\theoremstyle{remark}
\newtheorem{remark}[theorem]{Remark}

\newcommand{\Kc}{\mathbf K}
\newcommand{\Ec}{\mathbf E}
\newcommand{\M}{\operatorname{AG}}
\newcommand{\dd}{\,\mathrm d}

\title{Global convergence and monotonicity of Newton iteration
for the inverse Gr\"otzsch modulus}
\author{Deguang Zhong\textsuperscript{1,*}\\
\quad
\small\textsuperscript{1}Institute of Applied Mathematics, Shenzhen Polytechnic University,
Shenzhen 518055,  China
}
\date{}

\begin{document}
\maketitle

\begin{abstract}
Let
\[
 \mu(r)=\frac{\pi}{2}\frac{\Kc(r')}{\Kc(r)},
 \qquad r'=\sqrt{1-r^2},\qquad 0<r<1,
\]
be the Gr\"otzsch modulus function.  Problems 3.38(a)--(b) in a survey of
Vuorinen ask whether Newton's iteration for $\mu^{-1}(y)$, initialized by
$x_0=1/\cosh y$, converges for every $y>\pi/2$, and whether it is strictly
increasing when $y>\pi$.  We prove both assertions.  The main point is that
$\mu$ has exactly one inflection point $a\in(1/2,1/\sqrt2)$.  If the zero lies
in the convex region, the Newton iterates increase to it from the left.  If
the zero lies in the concave region, the orbit crosses the inflection point,
overshoots the zero at most once, and then decreases to the zero.  For
$y>\pi$ we obtain the stronger estimate $0<x_n<x_{n+1}<\mu^{-1}(y)<3-2\sqrt2<1.$
\end{abstract}
\medskip
\noindent\textbf{2020 Mathematics Subject Classification.}
Primary 30C62; Secondary 33C75, 65H05

\smallskip
\noindent\textbf{Key words and phrases.}
 Gr\"{o}tzsch modulus , Newton's method,  global convergence, elliptic integrals
\section{Introduction}

The function $\mu$ occurs throughout planar quasiconformal analysis.  It is the module of the Gr\"{o}otzsch extremal domain $\mathbb{B}\setminus[0,r]$ and enters, for example, the
Hersch--Pfluger distortion function \cite{AVV1988,  HP52, LV73}
\[
 \varphi_K(r)=\mu^{-1}\!\left(\frac{\mu(r)}{K}\right).
\]
Consequently, reliable inversion of $\mu$ is a basic step in the numerical
evaluation of sharp quasiconformal distortion bounds; see
\cite{AVVbook, AQVV2000, VuorinenSurvey}.  Kargar, Rainio, and Vuorinen \cite[Sections~1 and~3]{KRV2024}
recall that Newton iteration was used in \cite{AVVbook} for the
numerical evaluation of \(\mu^{-1}\) and \(\varphi_K\), and compare
their new Landen-based approximations with those earlier
implementations.

In \cite[Problems 3.38(a)--(b)]{VuorinenSurvey}, Vuorinen considered Newton's
method
\begin{equation}\label{eq:newton}
 x_0=\frac1{\cosh y},
 \qquad
 x_{n+1}=x_n-\frac{\mu(x_n)-y}{\mu'(x_n)},
 \qquad y>\frac\pi2,
\end{equation}
and asked:
\begin{enumerate}
 \item[(a)] Does \eqref{eq:newton} converge to $\mu^{-1}(y)$ for every
 $y>\pi/2$?
 \item[(b)] Is $x_n<x_{n+1}<1$ for all $n$ whenever $y>\pi$?
\end{enumerate}
To the best of our knowledge, no proof of the global convergence
asserted in \cite[Problem 3.38(a)]{VuorinenSurvey}, nor of the monotonicity assertion in
\cite[Problem 3.38(b)]{VuorinenSurvey}, has previously appeared in the literature. We answer both questions affirmatively.  Our proof is global: it identifies
the full qualitative geometry of $\mu$ needed by Newton's method, rather than
invoking a local convergence theorem whose hypotheses would have to be
verified near an unknown zero.

There is a sign issue worth recording at the outset.  If
$\M(1,r')$ denotes the arithmetic--geometric mean, then
\begin{equation}\label{eq:mu-prime-intro}
 \mu'(r)=-\frac{\M(1,r')^2}{r(1-r^2)}.
\end{equation}
Here, the definition of the arithmetic geometric mean of $a$ and $b$ is defined by follows: For $0<b<a$ define $a_0=a$, $b_0=b$, $a_{n+1}=(a_n+b_n)/2$ and $b_{n+1}=\sqrt{a_n b_n}$. Then, the common limit of these sequences
\begin{equation*}
  AG(a,b)=\lim_{n\rightarrow\infty}a_n=\lim_{n\rightarrow\infty}b_n,
\end{equation*}
is called the {\it arithmetic geometric mean} of $a$ and $b$; see \cite{BorweinBorwein}.  It follows that the AGM form of \eqref{eq:newton} is
\begin{equation}\label{eq:correct-agm}
 x_{n+1}=x_n+
 \frac{\bigl(\mu(x_n)-y\bigr)(x_n-x_n^3)}{\M(1,x_n')^2}.
\end{equation}
The displayed AGM formula in \cite[Section 3.37]{VuorinenSurvey} has a minus
sign in place of the plus sign in \eqref{eq:correct-agm}, although its first
expression is the canonical Newton step \eqref{eq:newton}.  We therefore
interpret the question, as its wording requires, by the first expression.
The literal minus-sign iteration cannot converge to the desired zero: as
shown below, $x_0<\mu^{-1}(y)$ and $\mu(x_0)>y$, so that its first step moves
strictly to the left, away from the zero.

\section{Some facts about elliptic integrals}
In this section, we recall some facts about elliptic integrals. The material presented in this section is adapted from \cite{AVVbook} and \cite{HKV20}.
For $0<r<1$, let
\begin{align*}
 \Kc(r)&=\int_0^{\pi/2}\frac{\dd t}
 {\sqrt{1-r^2\sin^2t}},\\
 \Ec(r)&=\int_0^{\pi/2}\sqrt{1-r^2\sin^2t}\,\dd t,
 \qquad r'=\sqrt{1-r^2},
\end{align*}
be the Legendre's complete elliptic integrals of the first and second kinds; see \cite[Formula (3.1)-(3.2)]{AVVbook}.  We use the
standard differentiation formulas (see \cite[Formula (3.6)-(3.7)]{AVVbook})
\begin{equation}\label{eq:KE-derivatives}
 \Kc'(r)=\frac{\Ec(r)-r'^2\Kc(r)}{rr'^2},
 \qquad
 \Ec'(r)=\frac{\Ec(r)-\Kc(r)}r,
\end{equation}
and Legendre's relation \cite[Formula (3.3)]{AVVbook}
\begin{equation}\label{eq:legendredcsdcs}
 \Kc(r)\Ec(r')+\Ec(r)\Kc(r')
 -\Kc(r)\Kc(r')=\frac{\pi}{2}.
\end{equation}
 From \cite[Formula (3.10)]{AVVbook}, we see that by differentiating
$\mu(r)=(\pi/2)\Kc(r')/\Kc(r)$ gives 
\begin{equation}\label{eq:mu-prime}
 \mu'(r)=-\frac{\pi^2}{4rr'^2\Kc(r)^2}
 =-\frac{\M(1,r')^2}{rr'^2}<0,
\end{equation}
where $\Kc(r)=\pi/(2\M(1,r'))$.  Thus $\mu$ is a decreasing
homeomorphism from $(0,1)$ onto $(0,\infty)$.

We shall also use the ascending Landen identity \cite[p. 51]{AVVbook}
\begin{equation}\label{eq:landen}
 \mu\!\left(\frac{2\sqrt{s}}{1+s}\right)=\frac12\mu(s),
 \qquad 0<s<1.
\end{equation}
With equation (\ref{eq:landen}), we have the following result.
\begin{lemma}\label{lem:initial}
If $y>0$, $\rho=\mu^{-1}(y)$, and
$x_0=1/\cosh y$, then $0<x_0<\rho$.
\end{lemma}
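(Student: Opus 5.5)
The plan is to reduce the claim to a single inequality for $\mu$ and prove that inequality by comparing derivatives. Since $\mu$ is a decreasing homeomorphism of $(0,1)$ onto $(0,\infty)$ by \eqref{eq:mu-prime}, and clearly $0<x_0=1/\cosh y<1$ for $y>0$, the inequality $x_0<\rho=\mu^{-1}(y)$ is equivalent to
\[
 \mu(x_0)>y .
\]
For $r=1/\cosh y$ we have $r'=\sqrt{1-1/\cosh^2 y}=\tanh y$. Hence
\[
 \frac{1+r'}{r}=\cosh y+\sinh y=e^{y},
 \qquad\text{so}\qquad
 y=\log\frac{1+r'}{r}.
\]
It therefore suffices to prove the lower bound
\[
 \mu(r)>\log\frac{1+r'}{r},\qquad 0<r<1,
\]
and apply it at $r=x_0$.

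To prove this bound, set $f(r)=\mu(r)-\log\bigl((1+r')/r\bigr)$ on $(0,1)$.

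\textbf{Boundary behaviour.} As $r\to1^-$ we have $r'\to0$. Then $\Kc(r')\to\pi/2$ and $\Kc(r)\to\infty$, so $\mu(r)\to0$. Also $\log\bigl((1+r')/r\bigr)\to\log 1=0$. Hence $f(1^-)=0$.

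\textbf{Derivative comparison.} A direct computation, using $\mathrm d r'/\mathrm d r=-r/r'$, gives
\[
 \frac{\mathrm d}{\mathrm d r}\log\frac{1+r'}{r}=-\frac{1}{rr'} .
\]
Combining this with \eqref{eq:mu-prime},
\[
 f'(r)=-\frac{\M(1,r')^2}{rr'^2}+\frac1{rr'}
 =\frac{r'-\M(1,r')^2}{rr'^2}.
\]
Thus $f'<0$ is equivalent to $\M(1,r')>\sqrt{r'}$. This is the classical fact that the AGM strictly exceeds the geometric mean when the two arguments differ. It follows from the definition recalled in the introduction: the sequence $b_n$ is nondecreasing starting from $b_1=\sqrt{1\cdot r'}$, and $b_2=\sqrt{a_1b_1}>b_1$ because $a_1>b_1$ when $r'<1$ (AM–GM with strict inequality).

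\textbf{Conclusion.} So $f$ is strictly decreasing on $(0,1)$ with $f(1^-)=0$, which gives $f>0$ there. Evaluating at $r=x_0$ yields $\mu(x_0)>y$, and hence $x_0<\rho$.

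The step needing the most care is confirming the sign bookkeeping in $f'$ and the strictness of $\M(1,r')>\sqrt{r'}$. Both are elementary, so no real obstacle is expected. The Landen identity \eqref{eq:landen} is not needed for this route. If one preferred to use it instead, an alternative would be to iterate \eqref{eq:landen} and compare with $\log(4/r)$-type asymptotics, but the derivative argument above is more direct.
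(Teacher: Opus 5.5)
Your proof is correct, but it takes a different route from the paper's.

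The paper cites the inequality $\mu(r)>\log(1/r)$ from \cite{HKV20} and transfers it to $x_0$ with the Landen identity \eqref{eq:landen}. It writes $x_0=2\sqrt{s}/(1+s)$ with $s=e^{-2y}$, so that $\mu(x_0)=\tfrac12\mu(s)>\tfrac12\log(1/s)=y$.

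You instead prove directly that $\mu(r)>\log\bigl((1+r')/r\bigr)$, by showing that the difference is strictly decreasing and tends to $0$ at $1^-$. Your identity $\frac{\mathrm d}{\mathrm dr}\log\frac{1+r'}{r}=-\frac1{rr'}$ is right. So is the reduction of $f'<0$ to the strict inequality $\M(1,r')>\sqrt{r'}$, including your justification of strictness.

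The two bounds are equivalent under Landen. With $r=2\sqrt s/(1+s)$ one has $r'=(1-s)/(1+s)$, hence $\log\frac{1+r'}{r}=\tfrac12\log\frac1s$. So your inequality at $r$ is exactly the cited inequality at $s$.

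What your approach buys is self-containment. It uses only \eqref{eq:mu-prime}, already in the paper, and the elementary AGM versus geometric-mean comparison, with no external citation and no Landen step. The same argument with the weaker $\M(1,r')>r'$ would also reprove the cited bound $\mu(r)>\log(1/r)$.

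The paper's route is shorter given the literature. It also shows why the initialization $1/\cosh y$ is natural: it is the Landen image of $e^{-2y}$, the point where the cruder logarithmic bound is applied.
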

\begin{proof}
A known result from  \cite[P. 122, (7.21)]{HKV20}, which states  that  for every $0<r<1$,
\begin{equation}\label{eq:mu-log}
 \mu(r)>\log\frac1r.
\end{equation}
Now put $s=e^{-2y}$.  Since
$$
 \frac1{\cosh y}=\frac{2e^{-y}}{1+e^{-2y}}
 =\frac{2\sqrt{s}}{1+s},
$$
the Landen identity (\ref{eq:landen}) and inequality \eqref{eq:mu-log} yield
$$
 \mu(x_0)=\frac{1}{2}\mu(s)>\frac{1}{2}\log\frac{1}{s}
 =y=\mu(\rho).
$$
Because $\mu$ is decreasing, $x_0<\rho$.
\end{proof}

\section{The unique inflection point}

The following lemma supplies the geometric input for the global Newton
argument.

\begin{lemma}\label{lem:inflection}
There exists a unique $a\in(1/2,1/\sqrt2)$ such that
\begin{equation}\label{eq:convex-concave}
 \mu''(r)>0\quad(0<r<a),
 \qquad
 \mu''(r)<0\quad(a<r<1).
\end{equation}
Moreover, if $c=1/\sqrt2$ and
\[
 N_y(r)=r-\frac{\mu(r)-y}{\mu'(r)},
\]
then
\begin{equation}\label{eq:key-barrier}
 N_{\pi/2}(a)<1.
\end{equation}
\end{lemma}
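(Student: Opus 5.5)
The plan is to reduce the sign of $\mu''$ to the sign of one explicit elliptic-integral combination, show that combination is strictly monotone, and then locate its zero and bound the Newton map by crude numerics.

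\emph{Step 1: the sign of $\mu''$.} By \eqref{eq:mu-prime},
\[
\mu'(r)=-\frac{\pi^2}{4\,g(r)},\qquad g(r)=r(1-r^2)\Kc(r)^2,
\]
so $\mu''=(\pi^2/4)\,g'/g^2$ and the sign of $\mu''$ is the sign of $g'$. Using \eqref{eq:KE-derivatives},
\[
g'(r)=(1-3r^2)\Kc^2+2r(1-r^2)\Kc\Kc'=\Kc(r)\,h(r),\qquad h(r)=2\Ec(r)-(1+r^2)\Kc(r).
\]
Hence \eqref{eq:convex-concave} is equivalent to $h$ having a unique sign change, from $+$ to $-$, at some $a$.

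\emph{Step 2: monotonicity of $h$.} Differentiating again with \eqref{eq:KE-derivatives} and putting everything over $rr'^2$ gives
\[
h'(r)=\frac{(1-3r^2)\Ec(r)-(1-r^4)\Kc(r)}{r\,r'^2}.
\]
For $r\ge1/\sqrt3$ both terms of the numerator are $\le0$, and the second is strictly negative. For $r<1/\sqrt3$ we use $\Ec<\Kc$ together with $1-3r^2<1-r^4$. In both cases $h'<0$.

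We also have $h(0^+)=\pi-\pi/2>0$, while $h(1^-)=-\infty$ because $\Ec(1^-)=1$ and $\Kc(1^-)=\infty$. So $h$ has exactly one zero $a$, with $h>0$ before it and $h<0$ after it.

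\emph{Step 3: locating $a$.} At $c=1/\sqrt2$, Legendre's relation \eqref{eq:legendredcsdcs} with $r=r'=c$ gives $2\Ec\Kc-\Kc^2=\pi/2$. Therefore
\[
h(c)=\frac{\pi}{2\Kc(c)}-\frac{\Kc(c)}{2},
\]
which is negative iff $\Kc(c)^2>\pi$. This follows from $\Kc(c)=\Gamma(1/4)^2/(4\sqrt\pi)\approx1.854$; alternatively, use a lower bound obtained from finitely many terms of the positive series $\Kc(r)=\frac{\pi}{2}\sum_n \bigl(\binom{2n}{n}/4^n\bigr)^2 r^{2n}$.

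At $r=1/2$ we need $2\Ec(1/2)>\frac54\Kc(1/2)$, numerically $2.935>2.107$. I would certify this with rigorous truncations of the hypergeometric series. For $\Kc$, a partial sum plus a geometric tail bound gives an upper bound. For $\Ec$, whose series terms after the first are negative, a partial sum minus a geometric bound on the remaining terms gives a lower bound. This yields $1/2<a<1/\sqrt2$.

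\emph{Step 4: the barrier \eqref{eq:key-barrier}.} Write $\mu(a)-\pi/2=\frac{\pi}{2}\bigl(\Kc(a')-\Kc(a)\bigr)/\Kc(a)$, which is positive since $a<c$. Then
\[
N_{\pi/2}(a)=a+\frac{2}{\pi}\,a(1-a^2)\,\Kc(a)\bigl(\Kc(a')-\Kc(a)\bigr),
\]
and it suffices to show that the second term is less than $1-a$. Over $a\in(1/2,1/\sqrt2)$ I would bound each factor crudely:
\begin{itemize}
\item $a(1-a^2)\le 2/(3\sqrt3)\approx0.385$;
\item $\Kc(a)\le\Kc(c)\approx1.854$;
\item $\Kc(a')-\Kc(a)\le \Kc(\sqrt3/2)-\Kc(1/2)\approx0.471$, by monotonicity of $\Kc$.
\end{itemize}
This gives a term below about $0.22$, whereas $1-a>1-1/\sqrt2\approx0.29$. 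If more room were needed, $\mu(a)-\pi/2$ could be bounded more sharply using the improved location of $a$ from Step 3.

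\emph{Main obstacle.} The structural Steps 1–2 are clean. The main work is making the numerical inequalities at $r=1/2$, $\sqrt3/2$ and $1/\sqrt2$ rigorous, which I would do with explicit series truncation bounds.
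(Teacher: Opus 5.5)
Your argument is correct, and Steps 1--2 match the paper in substance. Your $h=2\Ec-(1+r^2)\Kc$ equals $\Kc\,q$ for the paper's $q=2\Ec/\Kc-(1+r^2)$. The paper shows $q'<0$ through the identity $q'=-2\{(\Ec/\Kc-r'^2)^2+2r^2r'^2\}/(rr'^2)$, and your two-case sign check of $(1-3r^2)\Ec-(1-r^4)\Kc$ does the same job just as well.

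The genuine differences are in the two quantitative steps. At $r=1/2$ the paper needs no numerics: $\Ec-r'^2\Kc>0$ (i.e.\ $\Kc'>0$) gives $2\Ec(1/2)-\frac54\Kc(1/2)>\frac14\Kc(1/2)>0$ at once, so your series certification there is unnecessary.

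For the barrier, the paper argues geometrically. Since $\mu(c)=\pi/2$ and $|\mu'|$ increases on $(a,c)$, one has $\mu(a)-\pi/2\le(c-a)|\mu'(c)|$. The elementary bounds $\pi/2<\Kc(r)<\pi/(2r')$ give $|\mu'(c)|/|\mu'(a)|=g(a)/g(c)<2a/c<2$ (with your $g$). Hence $N_{\pi/2}(a)<2c-a<\sqrt2-\frac12<1$, using only $\Kc(c)>\sqrt\pi$.

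Your route bounds each factor of the explicit expression $a+\frac2\pi a(1-a^2)\Kc(a)(\Kc(a')-\Kc(a))$ uniformly over $(1/2,1/\sqrt2)$. It also closes, with a correction term of about $0.214$ against a margin $1-a>1-1/\sqrt2\approx0.29$, and it avoids the concavity argument entirely. However, it depends on certified values of $\Kc$ at $1/2$, $\sqrt3/2$ and $1/\sqrt2$. In particular, the elementary bound $\Kc(\sqrt3/2)<\pi$ is far too weak: it inflates the term to about $0.66$. So the series or AGM truncations you mention are genuinely needed there, whereas the paper's argument is essentially numerics-free.
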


\begin{proof}
Write
\[
 h(r)=rr'^2\Kc(r)^2,
 \qquad
 u(r)=\frac{\Ec(r)}{\Kc(r)},
 \qquad
 q(r)=2u(r)-(1+r^2).
\]
Equations \eqref{eq:KE-derivatives} give
\begin{equation}\label{eq:h-log}
 \frac{h'(r)}{h(r)}=\frac{q(r)}{rr'^2}.
\end{equation}
Since $\mu'=-\pi^2/(4h)$, it follows that
\begin{equation}\label{eq:musecond-sign}
 \operatorname{sgn}\mu''(r)=\operatorname{sgn}q(r).
\end{equation}
On the other hand, differentiating $u=\Ec/\Kc$ and using
\eqref{eq:KE-derivatives}, we find
\begin{equation}\label{eq:qprime}
 q'(r)=-\frac{2\{(u(r)-r'^2)^2+2r^2r'^2\}}{rr'^2}<0.
\end{equation}
Thus $q$ is strictly decreasing, and $q(0^+)=1$.

We next locate the zero of $q.$  Since $\Ec(r)-r'^2\Kc(r)>0$, one has $u(r)>r'^2$.  At $r=1/2$ this
gives
\[
 q(1/2)>2\cdot\frac34-\frac54=\frac14>0.
\]
At $c=1/\sqrt2$, the Legendre's relation (\ref{eq:legendredcsdcs}) reduces to
\[
 2\Kc(c)\Ec(c)-\Kc(c)^2=\frac\pi2,
\]
and hence
\[
 q(c)=\frac{\pi}{2\Kc(c)^2}-\frac12<0.
\]
For completeness, the last inequality follows already from the first three
positive terms of the hypergeometric series:
\[
 \Kc(c)>
 \frac\pi2\left(1+\frac18+\frac9{256}\right)
 =\frac{297\pi}{512}>\sqrt\pi.
\]
Therefore $q$ has a unique zero $a\in(1/2,c)$, and
\eqref{eq:convex-concave} follows from \eqref{eq:musecond-sign}.

It remains to prove the inequality \eqref{eq:key-barrier}.  Because
$\mu(c)=\pi/2$ and $\mu''<0$ on $(a,c)$, the function
$|\mu'|$ is increasing there.  Consequently,
\begin{equation}\label{eq:delta-bound}
 \mu(a)-\frac\pi2
 =\int_a^c|\mu'(t)|\dd t
 \le(c-a)|\mu'(c)|.
\end{equation}
Furthermore, the equation \eqref{eq:mu-prime} gives
\[
 \frac{|\mu'(c)|}{|\mu'(a)|}=\frac{h(a)}{h(c)}<2.
\]
Indeed, the elementary bounds
\[
 \frac\pi2<\Kc(r)<\frac\pi{2r'}
\]
imply
\[
 h(a)<\frac{\pi^2a}{4},
 \qquad
 h(c)>\frac{\pi^2c}{8},
 \qquad
 \frac{h(a)}{h(c)}<\frac{2a}{c}<2.
\]
Combining this with \eqref{eq:delta-bound}, we obtain
\[
 N_{\pi/2}(a)
 =a+\frac{\mu(a)-\pi/2}{|\mu'(a)|}
 <a+2(c-a)=2c-a
 <\sqrt2-\frac12<1,
\]
where we used $a>1/2$.  This proves \eqref{eq:key-barrier}.
\end{proof}

\section{Solution of the open problems}

\begin{theorem}\label{thm:main}
Let $y>\pi/2$, let $\rho=\mu^{-1}(y)$, and define $(x_n)$ by
\eqref{eq:newton}.  Then every iterate is well defined and belongs to
$(0,1)$, and
\[
                x_n\longrightarrow\rho.
\]
More precisely, with $a$ as in Lemma~\ref{lem:inflection}, the following
alternatives hold.
\begin{enumerate}
 \item If $\rho\le a$, then
 \[
       0<x_0<x_1<\cdots<x_n<\rho
 \]
 for every $n$.
 \item If $\rho>a$, then the orbit reaches $[a,1)$ after finitely many
 steps, crosses to the right of $\rho$ at most once, and thereafter decreases
 strictly to $\rho$.
\end{enumerate}
In either case the convergence is eventually quadratic.
\end{theorem}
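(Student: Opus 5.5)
The plan is to study the Newton map $N_y(r)=r-(\mu(r)-y)/\mu'(r)=r+(\mu(r)-y)\,h(r)\cdot 4/\pi^2$ through the signed error $f(r)=\mu(r)-y$, which is strictly decreasing with $f(\rho)=0$. Lemma~\ref{lem:inflection} gives convexity on $(0,a]$ and concavity on $[a,1)$. One identity drives everything: $N_y(r)-\rho=\bigl(f(\rho)-f(r)-f'(r)(\rho-r)\bigr)/f'(r)$. Since $f'<0$, $N_y(r)-\rho$ has the opposite sign to the tangent-line remainder. On an interval where $f$ is convex this remainder is $\ge0$, so $N_y(r)\le\rho$. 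Where $f$ is concave it is $\le0$, so $N_y(r)\ge\rho$.

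\emph{Case $\rho\le a$.} I argue by induction, starting from $0<x_0<\rho$, which is Lemma~\ref{lem:initial}. Suppose $0<x_n<\rho$. Then $f(x_n)>0$ and $f'(x_n)<0$, so $x_{n+1}>x_n$. Convexity of $f$ on $[x_n,\rho]\subset(0,a]$ gives $x_{n+1}\le\rho$, and strict convexity makes this strict. The sequence is therefore increasing and bounded by $\rho$, so it converges to some limit $L$. By continuity of $N_y$ on $(0,1)$, $L$ is a fixed point, so $f(L)=0$ and $L=\rho$.

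\emph{Case $\rho>a$.} While $x_n<a$, the iterate $x_n$ lies in the convex region with $x_n<a<\rho$. Then $x_{n+1}>x_n$, and I would bound $x_{n+1}$ using the tangent line at $x_n$ compared with $f$ only on $[x_n,a]$. The step cannot stay below $a$ forever: if it did, the iterates would increase to a limit $L\le a<\rho$ that is a fixed point, which is impossible because $f(L)>0$. So some first index $m$ has $x_m\ge a$. The main obstacle is showing $x_m<1$, that is, no catastrophic overshoot out of $(0,1)$. Here I would use \eqref{eq:key-barrier} as follows.
\begin{itemize}
\item For $r\le a$, the function $N_y(r)$ is increasing in $r$, because $N_y'(r)=f(r)f''(r)/f'(r)^2\ge0$ when $f(r)>0$ and $f''\ge0$.
\item The dependence on $y$ is monotone: $\partial N_y/\partial y=1/\mu'(r)<0$.
\end{itemize}
Since $x_{m-1}<a$ and $y>\pi/2$, these give
\[
x_m=N_y(x_{m-1})\le N_y(a)<N_{\pi/2}(a)<1.
\]
If $a\le x_m\le\rho$, then $x_m$ is in the concave region, so the next step lands in $[\rho,1)$. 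If instead $\rho<x_m<1$, we are already to the right of $\rho$. Either way the orbit crosses $\rho$ at most once.

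For $r\in(\rho,1)$, $f(r)<0$ gives $N_y(r)<r$, and concavity on $[\rho,r]\subset[a,1)$ gives $N_y(r)\ge\rho$, strictly unless $r=\rho$. So from then on the iterates decrease strictly, stay in $(\rho,1)$, and converge to a fixed point, which must be $\rho$. Well-definedness and the bound $x_n\in(0,1)$ follow along the way in both cases.

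Eventual quadratic convergence is then the standard local statement, since $\mu\in C^2$ near $\rho$ and $\mu'(\rho)\neq0$. The delicate point is the edge case $\rho=a$ or $x_m=a$ exactly, where the strict inequalities degenerate. I would handle these with the strictness of $q$ from \eqref{eq:qprime}, which means $f''$ vanishes only at $a$.
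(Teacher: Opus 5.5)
Your route is the paper's: tangent-line comparison on the convex and concave sides, the identity $N_y'=ff''/f'^2$, the monotonicity $\partial N_y/\partial y=1/\mu'<0$, and the barrier $N_{\pi/2}(a)<1$. There is, however, a gap at the sentence ``If $a\le x_m\le\rho$, then $x_m$ is in the concave region, so the next step lands in $[\rho,1)$.'' Concavity of $f$ on $[x_m,\rho]$ gives only the lower bound $N_y(x_m)\ge\rho$. It gives no upper bound at all.

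This upper bound is exactly the overshoot that Newton's method produces on the concave side, and it is the second place where \eqref{eq:key-barrier} is indispensable. You named ``no catastrophic overshoot out of $(0,1)$'' as the main obstacle, but you applied the barrier only to the entry step from $(0,a)$ into $[a,1)$. Without a bound $N_y(x_m)<1$, your final paragraph on $(\rho,1)$ cannot start. That paragraph presupposes that the crossed iterate lies in $(\rho,1)$, where $\mu$ is defined.

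The repair uses your own derivative identity. On $(a,\rho)$ one has $f>0$ and $f''<0$, so $N_y'<0$ and $N_y$ is decreasing on $[a,\rho]$. Hence $\rho\le N_y(x)\le N_y(a)<N_{\pi/2}(a)<1$ for $a\le x\le\rho$, which is how the paper argues in \eqref{eq:left-concave}.

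The same bound also handles the case $m=0$, which you did not exclude; your formula $x_m=N_y(x_{m-1})$ needs $m\ge1$. The paper rules this case out directly via $x_0<1/\cosh(\pi/2)<1/2<a$. With the monotonicity on $[a,\rho]$ it is harmless anyway, because $x_0<\rho<1$.

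With this step added your proof is complete. Your treatment of the phase to the right of $\rho$ works on all of $(\rho,1)$ rather than only on $(\rho,N(a)]$, which is slightly cleaner than the paper's.
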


\begin{proof}
Put $f(r)=\mu(r)-y$ and
\[
 N(r)=r-\frac{f(r)}{f'(r)}.
\]
The root $\rho$ is simple because $f'=\mu'<0$.  A direct differentiation
gives the useful identity
\begin{equation}\label{eq:Nprime}
 N'(r)=\frac{f(r)f''(r)}{f'(r)^2}.
\end{equation}
Lemma~\ref{lem:initial} gives $0<x_0<\rho$.

Suppose first that $\rho\le a$.  The function $f$ is strictly decreasing and
strictly convex on $(0,\rho)$.  For $0<x<\rho$, one has $f(x)>0$ and hence
$N(x)>x$.  Convexity gives
\[
 0=f(\rho)>f(x)+f'(x)(\rho-x),
\]
so the zero of the tangent line at $x$ lies strictly before $\rho$; that is,
$N(x)<\rho$.  Induction yields
\[
 x_n<x_{n+1}<\rho.
\]
The sequence therefore has a limit $L\le\rho$.  Passing to the limit in
$x_{n+1}-x_n=-f(x_n)/f'(x_n)$ shows that $f(L)=0$, whence $L=\rho$.

Now assume that $\rho>a$.  Since $y>\pi/2$ and $\mu(c)=\pi/2$, where
$c=1/\sqrt2$, we have
\begin{equation}\label{eq:root-location}
             a<\rho<c.
\end{equation}
 Moreover,
$x_0=1/\cosh y<1/\cosh(\pi/2)<\frac12<a.$
Indeed,
$\cosh(\pi/2)
 >1+(\pi/2)^2/2
 =1+\frac{\pi^2}{8}>2.$
Thus the orbit starts in the convex interval $(0,a)$.  Strict concavity of $f$ on $[a,\rho]$ implies that the tangent line at $a$
meets the horizontal axis to the right of $\rho$, so
\begin{equation}\label{eq:Na-rho}
 \rho<N(a).
\end{equation}
On the other hand, $y>\pi/2$ and Lemma~\ref{lem:inflection} give
\begin{equation}\label{eq:Na-one}
 N(a)=a+\frac{\mu(a)-y}{|\mu'(a)|}
 <N_{\pi/2}(a)<1.
\end{equation}

On $(0,a)$, formula \eqref{eq:Nprime} shows that $N'>0$, because
$f>0$ and $f''>0$.  Moreover $N(x)>x$ there.  If an orbit starting below
$a$ never reached $a$, it would increase to a limit $L\le a$.  Taking limits
in the Newton recurrence would give $f(L)=0$, contradicting $\rho>a$.
Hence the orbit enters $[a,1)$ after finitely many steps; by the monotonicity
of $N$ on $(0,a)$ and \eqref{eq:Na-one}, its first point in that interval is
smaller than $N(a)$.

For $a\le x<\rho$, strict concavity gives
\begin{equation}\label{eq:left-concave}
              \rho<N(x)\le N(a)<1.
\end{equation}
For $\rho<x\le N(a)$, the same tangent-line argument, now made from the
right of the root, gives
\begin{equation}\label{eq:right-concave}
              \rho<N(x)<x.
\end{equation}
Thus, after at most one crossing of the root, the iterates lie in
$(\rho,N(a)]$ and decrease to a limit.  Passing to the limit in the
recurrence again identifies that limit with $\rho$.  All iterates remain in
$(0,1)$ by \eqref{eq:Na-one}.  Finally, eventual quadratic convergence is the
standard local Newton estimate, since $f$ is smooth near the simple zero
$\rho$.
\end{proof}

\begin{corollary}[Solution of Problem 3.38(b)]\label{cor:monotone}
If $y>\pi$, then
\begin{equation}\label{eq:strong-monotone}
 0<x_n<x_{n+1}<\mu^{-1}(y)<3-2\sqrt2<1
 \qquad(n\ge0).
\end{equation}
\end{corollary}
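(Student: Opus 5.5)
The plan is to reduce the corollary to case 1 of Theorem~\ref{thm:main}. It suffices to show that for $y>\pi$ the root $\rho=\mu^{-1}(y)$ satisfies $\rho<3-2\sqrt2$ and that $3-2\sqrt2\le a$. Then $\rho\le a$, and case 1 gives $0<x_0<x_1<\cdots<x_n<\rho$ for every $n$, which is exactly the chain $0<x_n<x_{n+1}<\mu^{-1}(y)$. The final inequality $3-2\sqrt2<1$ is trivial.

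To bound $\rho$, I will first compute $\mu(3-2\sqrt2)$ using the Landen identity \eqref{eq:landen}. Put $s=3-2\sqrt2=(\sqrt2-1)^2$. Then $\sqrt s=\sqrt2-1$ and $1+s=4-2\sqrt2=2\sqrt2(\sqrt2-1)$, so
\[
\frac{2\sqrt s}{1+s}=\frac{2(\sqrt2-1)}{2\sqrt2(\sqrt2-1)}=\frac1{\sqrt2}=c .
\]
Hence $\mu(c)=\tfrac12\mu(s)$. Since $\mu(c)=\pi/2$ (because $c'=c$), we get $\mu(3-2\sqrt2)=\pi$. As $\mu$ is strictly decreasing and $y>\pi$, this gives $\rho=\mu^{-1}(y)<\mu^{-1}(\pi)=3-2\sqrt2$.

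It remains to compare with $a$. Lemma~\ref{lem:inflection} gives $a>1/2$, and $3-2\sqrt2\approx0.1716<1/2$. Indeed $3-2\sqrt2<1/2$ is equivalent to $5/2<2\sqrt2$, i.e.\ $25/4<8$, which holds. So $\rho<3-2\sqrt2<1/2<a$, and case 1 of Theorem~\ref{thm:main} applies. Strictness of $x_n<x_{n+1}$ is already part of case 1: there $f(x_n)>0$ and $f'(x_n)<0$ give $N(x_n)>x_n$.

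I do not expect a real obstacle. The only step needing care is the exact Landen evaluation $\mu(3-2\sqrt2)=\pi$, which relies on the algebraic identity $2\sqrt s/(1+s)=1/\sqrt2$ and on $\mu(1/\sqrt2)=\pi/2$, both verified above.
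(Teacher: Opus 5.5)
Your proposal is correct and is essentially the paper's proof: the Landen identity gives $\mu(3-2\sqrt2)=2\mu(1/\sqrt2)=\pi$, hence $\rho<3-2\sqrt2<1/2<a$, and case~1 of Theorem~\ref{thm:main} then gives the chain $0<x_n<x_{n+1}<\rho$. Your explicit algebraic check of $2\sqrt s/(1+s)=1/\sqrt2$ and of $3-2\sqrt2<1/2$ simply writes out what the paper asserts via the descending form $(1-c)/(1+c)$; note also that $y>\pi>\pi/2$, so the theorem's hypothesis holds.
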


\begin{proof}
At $c=1/\sqrt2$ one has $\mu(c)=\pi/2$.  The descending Landen
transformation (\ref{eq:landen}) therefore gives
\[
 \mu(3-2\sqrt2)
 =\mu\!\left(\frac{1-c}{1+c}\right)
 =2\mu(c)=\pi.
\]
Since $\mu$ is decreasing, $y>\pi$ implies
\[
 \rho=\mu^{-1}(y)<3-2\sqrt2<\frac12<a.
\]
We are consequently in the convex alternative of
Theorem~\ref{thm:main}, which proves \eqref{eq:strong-monotone}.
\end{proof}

\begin{remark}[The printed minus-sign formula]
If the second display in \cite[Section 3.37]{VuorinenSurvey} is interpreted
literally, define
\[
 \widetilde N_y(x)=x-
 \frac{(\mu(x)-y)x(1-x^2)}{\M(1,x')^2}.
\]
Lemma~\ref{lem:initial} gives $\mu(x_0)>y$ and $x_0<\rho$, hence
$\widetilde N_y(x_0)<x_0<\rho$.  As long as a literal iterate remains in
$(0,\rho)$ it continues to move left, and if it leaves that interval the
formula is no longer an iteration of $\mu$ on $(0,1)$.  It therefore cannot
converge to $\rho$.  The plus sign in \eqref{eq:correct-agm} is essential.
\end{remark}

\section{Concluding remarks}

The proof reveals why the numerical iteration is stable over the whole range
$y>\pi/2$.  The threshold $\pi/2$ places the target zero to the left of the
self-complementary point $1/\sqrt2$.  Within that interval, $\mu$ has only one
change of convexity.  Newton's method is monotone on the convex side; on the
short concave side it may overshoot, but the barrier $N_{\pi/2}(a)<1$ traps the
orbit in the concave basin to the right of the zero.  This mechanism also
shows that eventual monotonicity holds for all $y>\pi/2$, although global
monotone increase is guaranteed in the range where the zero lies before the
inflection point.  The condition $y>\pi$ in Problem 3.38(b) is a convenient
explicit subrange of the latter regime.

\medskip
\noindent\textbf{Acknowledgments.}
This work was supported by Guangdong Basic and Applied Basic Research Foundation (No. 2022A1515110967 and No. 2023A1515011809). We used GPT-5.6-sol to 
explore the Problems 3.38(a)--(b) in \cite{VuorinenSurvey}.  All mathematical arguments and proofs in the final manuscript were checked by the authors.

\end{document}